\numberwithin{equation}{section}
\newtheorem{thm}{Theorem}[section]
\newtheorem{lemma}{Lemma}[section]
\newtheorem{property}{Property}[section]
\newtheorem{corollary}{Corollary}[section]
\newtheorem{cor}{Corollary}[section]
\newtheorem{pro}{Proposition}[section]
\newtheorem{defi}{Definition}[section]
\begin{document}
\title[An affine isoperimetric  inequality for log-concave functions]{An affine isoperimetric  inequality for log-concave functions${}^*$}

\author[Zengle Zhang and Jiazu Zhou]  {Zengle Zhang$^{1}$ and Jiazu Zhou$^{2,3,**}$}

\address{\parbox[l]{1\textwidth}{
1 Key Laboratory of Group and Graph Theories and Applications, Chongqing University of Arts and
Sciences, Chongqing 402160, China.\\
\email{zhangzengle128@163.com} \\
2 School of Mathematics and Statistics, Southwest University, Chongqing 400715, China.\\
3 College of Science, Wuhan University of Science and Technology, Wuhan 430081, China. \\
\email{zhoujz@swu.edu.cn}
}}

\subjclass[2010]{52A40, 52A41} \keywords{LYZ ellipsoid; LYZ polar inequality; log-concave function; affine isoperimetric inequality; reverse affine isoperitmetric inequality. }

\thanks{*Supported in part by NSFC (No. 12071378, No. 12301071) and the Science and Technology Research Program of Chongqing Municipal Education Commission (No. KJQN202201339); }
\thanks{**The corresponding author}

\maketitle

\begin{abstract}   The authors gave an affine isoperimetric inequality \cite{LYZ2010} that gives a lower bound for the volume of a polar body and the equality holds if and only if the body is a simplex. In this paper, we give  a functional isoperimetric   inequality for log-concave functions that contains the affine isoperimetric inequality  of Lutwak, Yang and Zhang   in \cite{LYZ2010}.
 \end{abstract}

\vskip 0.5cm

\section{Introductions}
The isoperimetric problem
was known in Ancient Greece.
However, the first mathematically rigorous proof was obtained only
in the 19th century by Weierstrass based on works of Bernoulli, Euler, Lagrange and others.
The isoperimetric problem is equivalent to the isoperimetric inequality that is bounding by the surface area and volume of the geometric domain $K$ in the Euclidian space $\mathbb {R}^n$ with equality if and only if $K$ is a standard ball.
The natural generalizations of the isoperimetric inequality are Alexandrov-Fenchel inequalities that are bounding by mixed volumes of convex bodies in integral and convex geometry analysis.
Blascheke-Santal\'o inequality, differential affine isoperimetric inequality, Busemann-Petty centroid inequality, Petty projection inequality and more affine isoperimetric inequalities are found with  equalities  for ellipsoids, polar bodies or simplices \cite{Ball1991, CampiGronchi2002, CampiGronchi2002-2, HF2009, LYZ2000, LYZ2000Duke, LYZ2004, LYZ2007, LZ1997, MeyerWerner2006, SW2012}.

During past decades, the reverse affine isoperimetric inequalities, with simplices,  cubes or  polar bodies as extremal,  received more attentions.
 Usually, reverse affine isoperimetric inequalities are  much harder to establish.
One of the known  open problem about the reverse affine isoperimetric inequalities is the  Mahler conjecture.

 Let $\mathcal{K}_o^n$ denote the set of compact convex sets that contain the origin in their interiors in  $\mathbb{R}^n$.
Let $K\in\mathcal{K}_o^n$, the polar body  $K^\circ$  of $K$, is defined by
\begin{align}
K^\circ=\{x\in\mathbb{R}^n:  x\cdot y \le 1,\quad \text{for all}\quad y\in K \},
\end{align}
and $ x\cdot y$ denotes the inner product of $x$ and $y$.

 If $K\in\mathcal{K}_o^n$ is symmetric, Mahler\cite{Mahler1939} conjectured that
\begin{align}\label{In.mahlerC}
|K||K^\circ|\ge \frac{4^n}{n!},
\end{align}
where $|K|$ denotes the volume of $K$, with equality if and only if either $K$ or $K^\circ$ is a parallelepiped.

Inequality (\ref{In.mahlerC}) was proved by Mahler  \cite{Mahler1939} for $n=2$, the  $3$-dimensional case was solved by Iriyeh and Shibata \cite{IS2020}, and the case when $n\ge 4$ is still open. One can see \cite{Ball1991,Barthe1998,BBF14,CampiGronchi2002,CampiGronchi2002-2,GiannopoulosMilman2002,
GiannopoulosPapad2000,HF2009,LYZ2000,LYZ2000Duke,LYZ2004,LYZ2007,LYZ2010,LZ1997,MeyerWerner2006,SW2012,ZhuHongYe,ZouXiong2014,ZouXiong2016} for more references on Mahler's conjecture, affine isoperimetric inequalities and reverse affine isoperimetric inequalities.

Let $K\in\mathcal{K}_o^n$. The radial function of $\Gamma_{-2}K$, usually called the LYZ ellipsoid, is defined by
\begin{align}\label{radial-LYZ-ellipsoid}
\rho_{\Gamma_{-2}K}(u)^{-2}=\frac{1}{|K|}\int_{\partial K}| u\cdot n_K(x)|^2h_{K}(n_K(x))^{-1}d\mathcal{H}^{n-1}(x), \quad u\in S^{n-1},
\end{align}
where $n_K(x)$ denotes the outer unit normal vector of  at $x \in \partial K$, $h_K$ is the support function of $K$ and $\mathcal{H}^{n-1}$ is the $(n-1)$-dimensional Hausdorff measure.
Another known reverse affine isoperimetric inequality is the following {\em LYZ  polar inequality} \cite{LYZ2010}.
\bigskip

{\bf LYZ polar inequality.}
Let $K\in\mathcal{K}_o^n$, then
\begin{eqnarray}\label{In.LYZpolar}
|K^{\circ}||\Gamma_{-2}K|\geq\frac{(n+1)^{\frac{n+1}{2}}\omega_n}{n!n^{\frac{n}{2}}}
\end{eqnarray}
with equality if and only if $K$ is a simplex whose centroid is at the origin.
\bigskip

Recently, Fang and Zhou \cite{fangzhou1} introduced the LYZ ellipsoid of log-concave functions by solving an extremum problem. A function $f:\mathbb{R}^n\to\mathbb{R}$ is called a log-concave function if $\log f$ is concave. Log-concave functions would be considered as  extension of convex bodies. Usually, the results of convex bodies can be covered by taking  log-concave functions, 
such as the Gaussian function or the characteristic function of convex bodies.
The researches on log-concave functions originated from Ball's Phd thesis \cite{Ball1986}, in which the Blaschke-Santal\'{o} inequality of even log-concave function was established.
In recent years, some considerable progress has been made in establishing the geometric inequalities of log-concave function.
Colesanti and Fragal\`a\cite{CF13} established the Minkowski inequality of log-concave function via the Pr\'{e}kopa-Leindler inequality. Fang, Xing and Ye\cite{FXY20+} further gave $L_p$ Minkowski inequalities and solved the existence of the even $L_p$ Minkowski problem of log-concave functions for $p>1$.
John and L\"{o}wner ellipsoids for log-concave functions and their related inequalities were given in \cite{AMJV18,LSW19jga}.
Some affine isoperimetric inequalities of the affine surface areas of log-concave functions were obtained in \cite{CFGLSW16,CY16}. Other inequalities, such as P\'olya-Szeg\"o inequality, Rogers-Shephard inequality and its reverse and some stability results for log-concave functional inequalites can be found in \cite{ABM,AMJV16,AMJV18,AFS20,AKM04,AKSW12,BBF14,BD2021,Col06,FM07,FM08,IN2022,KM05,LSW19,LSW19jga,Lin17}.

 In this paper, for a convex function $\varphi:\mathbb{R}^n\to\mathbb{R}\cup\{\infty\}$, we use $\varphi^\ast$ to denote the Legendre transform of $\varphi$, and $f^\circ=e^{-\varphi^\ast}$ to denote the dual of a log-concave function $f$. The integral of $f$ on $\mathbb{R}^n$ is denoted by $J(f)$ and the domain of $\varphi$ is denoted by ${\rm dom}(\varphi)=\{x:\varphi(x)<\infty\}$.
For any function $f:\mathbb{R}^n\to\mathbb{R}$ and $t\in\mathbb{R}$, the upper level set of $f$ is defined by
$$
K_t(f)=\{x\in\mathbb{R}^n: f(x)\ge t\}.
$$
Let ${\rm LC}_n$ denote the class of all upper semi-continuous log-concave functions $f=e^{-\varphi}$ with ${\rm dom}(\varphi)=\mathbb{R}^n$ and $0<J(f)<\infty$.

Let $f \in \ {\rm LC}_n$,  for the LYZ ellipsoid $\Gamma_{-2}f$ of
 the log-concave function $f$,  defined in Definition \ref{Def.LYZEforLC}, we have
 the following affine isoperimetric inequality, the functional LYZ polar inequality  of  (\ref{In.LYZpolar}).
\bigskip

{\bf Theorem.}
Let $f=e^{-\varphi}\in{\rm LC}_n$ and $\mu_f$ be the surface area measure of $f$. Suppose that $\varphi^\ast(x)>0$ on $\mathbb{R}^n\backslash\{o\}$ and $-\log t=\sup\{\varphi^\ast(x/\varphi^\ast(x)):x\in {\rm supp} (\mu_f)\}$, then
\begin{eqnarray}\label{our-inequality}
|K_{t}(f^\circ)|J(\Gamma_{-2}f)
\geq\frac{8^{\frac{n}{2}}(n+1)^{\tfrac{n+1}{2}}\Gamma(\frac{n}{2}+1)\omega_n}{n!n^n}.
\end{eqnarray}
Moreover, if $\varphi(x)>\varphi(o)$ for all $x\in\mathbb{R}^n\backslash\{o\}$, then equality holds if and only if $f(x)=e^{-{\|x\|_K}+1}$ and $K$ is a simplex whose centroid is at the origin.
\bigskip

Note that the inequality (\ref{our-inequality})  is affine invariant, that is, the inequality (\ref{our-inequality})
does not change under compositions of $f$  with affine transformations of $\mathbb{R}^n$. Inequality (\ref{our-inequality})
includes the LYZ polar inequality as  a special case. In particular,
when $f(x)=e^{-{\|x\|_K}+1}$ and $K$ is a convex body contains origin in its interior, the inequality (\ref{our-inequality}) becomes the LYZ polar
inequality (\ref{In.LYZpolar}). A detailed derivation can be found in Section \ref{Sec.5}. The supremum $-\log t$ may achieve infinity
and in this case the inequality (\ref{our-inequality}) is strict. However,  $- \log t$ is often bounded and some examples of the function $f$ will be given in Section \ref{Sec.5}.

This paper is organized as follows. In Section \ref{Sec.2}, we provide some basics of convex bodies and  log-concave functions. The Ball-Barth inequality for isotropic embeddings, which is a key tool to establish our main theorem, is introduced in Section \ref{Sec.3}.
In Section \ref{Sec.4}, the definition of the LYZ ellipsoid of the log-concave function $\Gamma_{-2}f$ is given, and its properties is listed. In Section \ref{Sec.5}, we give the proof of the main theorem and show that LYZ's polar inequality is a direct result of inequality (\ref{our-inequality}) ((Theorem \ref{mainthm}).

%

\section{Preliminaries and notations}\label{Sec.2}

In this section, we list some  preliminaries and notations about convex bodies and log-concave functions. Good
general references for the theory of convex bodies and log-concave functions are provided by the books of Gardner \cite{Gardner1995}, Schneider\cite{SchneiderR2014} and Rockfellar \cite{Roc70}.

\subsection{Basics regarding convex bodies}

Let $e_1,\cdots,e_n$ denote the standard Euclidean basis in the $n-$dimensional Euclidean space $\mathbb{R}^n$. For $x,y\in\mathbb{R}^n$,
$x\cdot y$ stands for the inner product of $x,y$, and $|x|$ for the Euclidean norm
of $x$. Let $K$ be a convex body (non-empty compact convex set of $\mathbb{R}^n$) contains the origin in its interior. The Minkowski function of $K$ is defined by
\begin{align}
\|x\|_K=\inf\{\lambda\ge0: x\in \lambda K\}.
\end{align}
Clearly, if $K$ is the unit ball $B^n$ in $\mathbb{R}^n$, the Minkowski function $\|\cdot\|_K$ becomes the Euclidean norm.

For convex body $K$,
its support function $h_K:\mathbb{R}^n\to\mathbb{R}$ is defined by
\begin{align*}
h_K(x)=\max\{x\cdot y: y\in K\},
\end{align*}
and its radial function $\rho_K: \mathbb{R}^n\backslash\{o\}\to\mathbb{R}$ is defined by
\begin{align*}
\rho_K(x)=\max\{\lambda\ge0: \lambda x\in K\}.
\end{align*}
The support function and Minkowski function are homogeneous of degree $1$ while the radial
function is homogeneous of degree $-1$. From the definitions of the support, Minkowski and radial functions and the definition of the polar body, one can see that
\begin{align}\label{Eq.Minkowski-support-radial}
\|x\|_{K}=h_{K^\circ}(x)=\frac{1}{\rho_K(x)},
\end{align}
for any $x\neq o$.
Let $GL(n)$ denote the general linear group of $\mathbb{R}^n$.
For $T\in GL(n)$,
\begin{align}\label{Eq.(TK)o}
(TK)^\circ=T^{-t}K^\circ,
\end{align}
where $T^{-t}$ is the inverse of transpose of $T$.

\subsection{Functional setting}

Let $\varphi: \mathbb{R}^n\rightarrow \mathbb{R}\cup \{+\infty\}$.   If for every $x,y\in\mathbb{R}^n$ and $\lambda\in[0,1]$,
$$
\varphi((1-\lambda)x+\lambda y)\leq (1-\lambda)\varphi(x)+\lambda \varphi(y),
$$
then $\varphi$ is a convex function. Let
$
\text{dom}(\varphi)=\{x\in \mathbb{R}^n: \varphi(x)<\infty\}.
$
Clearly, $\text{dom}(\varphi)$ is a convex set for any convex function $\varphi$.
The \emph{Legendre trasformation}
of $\varphi$ is the convex function defined by
\begin{eqnarray}\label{Fenchel conjugate}
\varphi^*(y)=\sup_{x\in\mathbb{R}^n}\left\{ x\cdot y-\varphi(x)\right\}\quad\quad\forall y\in\mathbb{R}^n.
\end{eqnarray}
Clearly, $\varphi(x)+\varphi^*(y)\geq  x\cdot y$ for all $x, y\in\mathbb{R}^n $, there is an equality if and only if  $x\in\text{dom}(\varphi)$ and $y$ is the gradient of $\varphi$ at $x$. Hence,
 $$
 \varphi^*(\nabla \varphi(x))+\varphi(x)=  x\cdot \nabla \varphi(x).
 $$
The convex function $\varphi:\mathbb{R}^n\rightarrow \mathbb{R}\cup \{+\infty\}$ is \emph{lower semi-continuous}, if the subset $\{x\in \mathbb{R}^n: \varphi(x)\le t\}$ is a closed set for any $t\in(-\infty,+\infty]$. If $\varphi$ is a lower semi-continuous  convex function, then also $\varphi^*$ is a lower semi-continuous  convex function, and $\varphi^{**}=\varphi$.



%
%


Let $f$ be a log-concave function.
The total mass of $f$ is defined by
\begin{align*}
J(f)=\int_{\mathbb{R}^n}f(x)dx.
\end{align*}
For $f=e^{-\varphi},g=e^{-\psi}$ and  $\alpha,\beta>0$,
the Asplund sum of $f$ and $g$ is defined by
\begin{eqnarray*}\label{addtion on log-concave function}
\alpha\cdot f\oplus\beta\cdot g=e^{-(\alpha\varphi^\ast+\beta\psi^\ast)^\ast}.
\end{eqnarray*}
Let $f=e^{-\varphi},g=e^{-\psi}\in{\rm LC}_n$. Rotem \cite{Rot22} gives the following first variational formula
\begin{eqnarray}\label{definition of the first variation}
\delta J(f,g)=\lim_{t\rightarrow 0^{+}}\frac{J(f\oplus t\cdot g)-J(f)}{t}
=\int_{\mathbb{R}^n}\psi^\ast(x)d\mu_f(x),
\end{eqnarray}
where $\mu_f$ is the surface area measure of $f$, which is defined by
$$
\mu_f(A)=\int_{\nabla\varphi(x)\in A}f(x)dx
$$
for any $A\subset\mathbb{R}^n$.
%

\section{The Ball-Barth inequality}\label{Sec.3}

Let $\mu$ be a positive Borel measure $\mu$ on $S^n$. A positive semi-definite $n\times n$ matrix $[\mu]$ can be generated by $\mu$, which is defined by
\begin{eqnarray}\label{def.matrix}
[\mu]=\int_{S^n}u\otimes ud\mu(u)
\end{eqnarray}
where $u\otimes u$ is the rank $1$ matrix generated by $u\in S^n$, or, equivalently by
\begin{eqnarray}\label{positive semi-definite}
v\cdot[\mu]v=\int_{S^n}|v\cdot u|^2d\mu(u)
\end{eqnarray}
for all $v\in S^n$. A positive Borel measure $\mu$ is said to be an isotropic measure if
\begin{align}\label{isotropic=m}
[\mu]=I_n,
\end{align}
where $I_n$ is the identity matrix. If $\mu$ is isotropic, summing the equation (\ref{positive semi-definite}) with $v=e_i, i=1,\cdots,n+1$, one has
\begin{eqnarray}\label{trace of positive semi-definite}
\mu(S^{n})=n+1.
\end{eqnarray}

The following Ball-Barth inequality was established in \cite{LYZ2004}.
\vskip 0.2 cm
\textbf{The Ball-Barth inequality.} \emph{Let $\mu$ an isotropic measure on $S^n$ and $l: S^n\rightarrow (0,\infty)$ be a continuous function. Then
\begin{eqnarray}\label{Ball-Barth-I}
\det\int_{S^n}l(u)u\otimes ud\mu(u)\geq \exp\left\{\int_{S^n}\log l(u)d\mu(u)\right\},
\end{eqnarray}
with equality if and only if $l(u_1) \cdots l(u_{n+1})$ is a constant for linearly independent $u_1,\cdots, u_{n+1}$ in ${\rm supp}(\nu)$.}
\vskip 0.2 cm

The isotropic embedding from $\mathbb{R}^n$ to $S^n$ is given as follows.

\begin{defi}\label{Def.iso-embedding}
Let $(\mathbb{R}^n,\mu)$ be a Borel measure space.   A continuous function  $h:\mathbb{R}^n\rightarrow S^{n}$   is  an isotropic embedding of $(\mathbb{R}^n,\mu)$ into $S^n$ if
\begin{eqnarray}\label{def.eq.iso-em}
\int_{\mathbb{R}^n}|u\cdot h(x)|^2d\mu(x)=1
\end{eqnarray}
for all $u\in S^{n}$.
\end{defi}

Let $\mu$ and $h$ have been given in Definition \ref{Def.iso-embedding}.
Summing the equation (\ref{def.eq.iso-em}) with $u=e_1,\cdots, e_n,e_{n+1}$, one has
\begin{eqnarray}\label{Eq.iso-emb}
\mu(\mathbb{R}^n)=n+1.
\end{eqnarray}
Define a new Borel measure $\nu$ on $S^n$, which is the push-forward of $\mu$ by $h$, by
\begin{align*}
\int_{S^n}g(u)d\nu(u)=\int_{\mathbb{R}^n}g(h(x))d\mu(x)
\end{align*}
for any Borel function $g: S^n\to\mathbb{R}$. Definition \ref{Def.iso-embedding} shows that $\nu$ is an isotropic measure on $S^n$, by applying the Ball-Barthe inequality to $\nu$, we can obtain
the following Ball-Barthe inequality for isotropic embeddings.
\begin{pro}\label{Pro.Ball-Barthe}
If $h:\mathbb{R}^n\rightarrow S^{n}$ is  an isotropic embedding of the Borel measure space $(\mathbb{R}^n,\mu)$ into $S^n$, then for each continuous $l: \mathbb{R}^n\rightarrow (0,\infty)$
\begin{eqnarray}\label{}
\det\int_{\mathbb{R}^n}l(x)h(x)\otimes h(x)d\mu(x)\geq \exp\left\{\int_{\mathbb{R}^n}\log l(x)d\mu(x)\right\},
\end{eqnarray}
with equality if and only if $l(x_1),\cdots,l(x_{n+1})$ is constants for  $x_1,\cdots, x_{n+1}$ in ${\rm supp}(\nu)$ such that $h(x_1),\cdots,h(x_{n+1})$ are linearly independent.
\end{pro}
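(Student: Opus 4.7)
The plan is to deduce the inequality from the classical Ball-Barth inequality on $S^{n}$ (stated just before the proposition) by transporting both the measure and the function through $h$. The first step is to verify that the push-forward measure $\nu=h_{\#}\mu$ introduced in the excerpt is isotropic on $S^{n}$. This is essentially immediate from Definition \ref{Def.iso-embedding}: for every $v\in S^{n}$,
\begin{align*}
v\cdot [\nu]\,v = \int_{S^{n}}|v\cdot u|^{2}\,d\nu(u) = \int_{\mathbb{R}^{n}}|v\cdot h(x)|^{2}\,d\mu(x) = 1,
\end{align*}
so $[\nu]=I_{n+1}$, and in particular $\nu(S^{n})=n+1$, consistent with (\ref{Eq.iso-emb}).

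The main step handles the mismatch that Ball-Barth on $S^{n}$ requires a function defined on $S^{n}$, whereas our $l$ lives on $\mathbb{R}^{n}$. I would produce a companion function $\tilde l:S^{n}\to(0,\infty)$ via a regular conditional expectation along $h$, setting
\begin{align*}
\log\tilde l(u) = \mathbb{E}_{\mu}[\log l\mid h=u].
\end{align*}
By construction and the change-of-variables formula for $\nu=h_{\#}\mu$,
\begin{align*}
\int_{S^{n}}\log\tilde l(u)\,d\nu(u) = \int_{\mathbb{R}^{n}}\log l(x)\,d\mu(x),
\end{align*}
so the right-hand side of the target inequality is exactly $\exp\int_{S^{n}}\log\tilde l\,d\nu$. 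Jensen's inequality gives the pointwise bound $\tilde l(h(x))\le \mathbb{E}_{\mu}[l\mid h](x)$ $\mu$-a.e., and since $h(x)\otimes h(x)$ is $\sigma(h)$-measurable, the tower property yields
\begin{align*}
\int_{S^{n}}\tilde l(u)\,u\otimes u\,d\nu(u) = \int_{\mathbb{R}^{n}}\tilde l(h(x))\,h(x)\otimes h(x)\,d\mu(x) \preceq \int_{\mathbb{R}^{n}}l(x)\,h(x)\otimes h(x)\,d\mu(x)
\end{align*}
in the Loewner order. Loewner-monotonicity of $\det$ on the positive semidefinite cone then reduces the task to lower-bounding $\det\int_{S^{n}}\tilde l(u)\,u\otimes u\,d\nu(u)$, and applying the classical Ball-Barth inequality to the isotropic measure $\nu$ and the continuous $\tilde l$ supplies precisely $\exp\int_{S^{n}}\log\tilde l\,d\nu$, closing the chain.

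The main technical obstacle is making the disintegration rigorous when $h$ is not injective, so that $\tilde l$ is well-defined and sufficiently regular to be fed into Ball-Barth; this is standard on the Polish space $\mathbb{R}^{n}$, using that $\mu$ has finite total mass $n+1$ and that $l$ is continuous. For the equality characterization I would trace back through the argument: equality in Jensen forces $l$ to be constant on each fibre $h^{-1}(u)$, so that $l(x)=\tilde l(h(x))$ holds $\mu$-a.e.; equality in the classical Ball-Barth inequality applied to $\nu$ then forces $\tilde l(u_{1})\cdots\tilde l(u_{n+1})$ to be a constant for every choice of linearly independent $u_{1},\dots,u_{n+1}\in\mathrm{supp}(\nu)$; pulling this back through $h$ yields the stated condition that $l(x_{1})\cdots l(x_{n+1})$ is constant for $x_{1},\dots,x_{n+1}$ whose images under $h$ are linearly independent, matching the equality case as written.
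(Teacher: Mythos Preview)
Your approach is correct and, in fact, more careful than the paper's own argument. The paper simply pushes $\mu$ forward to $\nu=h_{\#}\mu$ on $S^{n}$ and invokes the classical Ball--Barthe inequality there, implicitly treating $l$ as if it were already a function on $S^{n}$. That shortcut is fully justified when $l$ factors through $h$, i.e.\ $l(x)=L(h(x))$ for some $L:S^{n}\to(0,\infty)$; and indeed, in the only place Proposition~\ref{Pro.Ball-Barthe} is ever applied (Lemma~\ref{Le.key}), the relevant integrand $\tau'(y\cdot h(x))/h_{i}(x)$ visibly depends on $x$ only through $h(x)$, so no extra work is needed there. You correctly spotted that for a \emph{general} continuous $l:\mathbb{R}^{n}\to(0,\infty)$ the push-forward alone does not suffice, and your fix---define $\log\tilde l$ as the conditional expectation of $\log l$ along $h$, use Jensen fibrewise to get the Loewner comparison, then apply Ball--Barthe to $(\nu,\tilde l)$---is exactly the right additional ingredient. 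One small point to tighten: $\tilde l$ is only defined $\nu$-a.e.\ and need not be continuous, whereas the Ball--Barthe inequality is stated in the paper for continuous $l$; you should either remark that the inequality extends to bounded measurable integrands by routine approximation, or restrict attention to the case $l=L\circ h$ that the paper actually uses.
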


\section{The LYZ ellipsoid of log-concave functions}\label{Sec.4}

In this section, we will define the LYZ ellipsoid of log-concave functions, and introduce its properties.

\begin{defi}\label{Def.LYZEforLC}
Let $f=e^{-\varphi}\in{\rm LC}_n$. Suppose that $\varphi^\ast(x)>0$ for any $x\in{\mathbb{R}^n\backslash\{o\}}$, then the \emph{LYZ ellipsoid} of $f$, denoted by $\Gamma_{-2}f$, is defined as
\begin{align}\nonumber
-\log\Gamma_{-2}f(x)&=\frac{n^2}{8\delta J(f,f)}\int_{\mathbb{R}^n}| x\cdot y|^2\varphi^*(y)^{-1}d\mu_f(y)\\
\label{Def.2}
&=\frac{n^2}{8\delta J(f,f)}\int_{\mathbb{R}^n}| x\cdot \nabla\varphi(y)|^2\varphi^*(\nabla\varphi(y))^{-1}e^{-\varphi(y)}dy.
\end{align}
\end{defi}

Note that the function $\sqrt{-\log\Gamma_{-2}f(x)}$ is a Minkowski functional of an ellipsoid. Indeed, if $E$ is an ellipsoid generated by $n\times n$ real symmetric matrix $A$, that is
\begin{align*}
E=\{x\in\mathbb{R}^n: x\cdot Ax\le 1\},
\end{align*}
one can check that the Minkowski function of $E$ is given by
$
\|x\|_E^2=x\cdot A x.
$
From this, we see that the function $\sqrt{-\log\Gamma_{-2}f(x)}$ is a Minkowski functional of an ellipsoid, which is generated by $n\times n$ real symmetric matrix $M=[m_{ij}(f)]$, where
\begin{align*}
m_{ij}(f)={\frac{n^2}{8\delta J(f,f)}}\int_{\mathbb{R}^n} (e_i\cdot y) (e_j\cdot y)\varphi^*(y)^{-1}d\mu_f(y).
\end{align*}

\begin{property}\label{affine}
Let $f\in{\rm LC}_n$ and $T\in {\rm GL}(n)$. Then
\begin{eqnarray}\label{Eq.GL}
\Gamma_{-2}(f\circ T)=(\Gamma_{-2}f)\circ T.
\end{eqnarray}
\end{property}
\begin{proof}
Let $T^t$ be the transpose of $T$. From the definition of Legendre transform (\ref{Fenchel conjugate}) and the fact that $ Tx\cdot y= x\cdot T^{t}y$, one has
\begin{eqnarray}\nonumber
(\varphi\circ T)^*(y)&=&\sup_{x\in\mathbb{R}^n}\left\{x\cdot y-\varphi(Tx)\right\}\\\nonumber
&=&\sup_{x\in\mathbb{R}^n}\left\{ x\cdot T^{-t}y-\varphi(x)\right\}\\\label{affine-legendre}
&=&\varphi^*(T^{-t}y).
\end{eqnarray}
By (\ref{Def.2}) and the fact that $\nabla (\varphi\circ T)(y)=T^t\nabla \varphi(Ty)$, one has
\begin{eqnarray*}\label{}
&&\int_{\mathbb{R}^n}| x\cdot y|^2(\varphi\circ T)^*(y)^{-1}d\mu_{f\circ T}(y)\\
&&\quad\quad =\int_{\mathbb{R}^n}| x\cdot \nabla(\varphi\circ T)(y)|^2(\varphi\circ T)^*(\nabla (\varphi\circ T)(y))^{-1}e^{-\varphi(Ty)}dy\\
&&\quad\quad=\int_{\mathbb{R}^n}| x\cdot T^t\nabla \varphi(Ty)|^2\varphi^*(\nabla\varphi(Ty))^{-1}e^{-\varphi(Ty)}dy\\
&&\quad\quad=\int_{\mathbb{R}^n}| Tx\cdot \nabla\varphi(Ty)|^2\varphi^*(\nabla\varphi(Ty))^{-1}e^{-\varphi(Ty)}dy\\
&&\quad\quad=|\det T|^{-1}\int_{\mathbb{R}^n}| Tx\cdot y|^2\varphi^*(y)^{-1}d\mu_{f}(y).
\end{eqnarray*}
This together with $ \delta J(f\circ T,f\circ T)=|\det T|^{-1}\delta J(f,f)$ gives (\ref{Eq.GL}).
\end{proof}
The following corollary shows that the quantity $|K_t(f^\circ)|J(\Gamma_{-2}(f))$ is invariant under the operation of $GL(n)$.

\begin{corollary}
Let $f\in{\rm LC}_n$ and $T\in {\rm GL}(n)$. Then for any $t>0$
$$
|K_t((f\circ T)^\circ)|J(\Gamma_{-2}(f\circ T))=|K_t(f^\circ)|J(\Gamma_{-2}(f)).
$$
\end{corollary}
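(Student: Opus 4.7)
The plan is to show that both factors $|K_t((f\circ T)^\circ)|$ and $J(\Gamma_{-2}(f\circ T))$ scale by reciprocal powers of $|\det T|$ under the composition, so their product is unchanged. I will use Property \ref{affine} for the LYZ-ellipsoid factor and the Legendre-transform identity (\ref{affine-legendre}), already established in the proof of Property \ref{affine}, for the polar level-set factor.

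For the ellipsoid factor, Property \ref{affine} gives $\Gamma_{-2}(f\circ T)(x) = (\Gamma_{-2}f)(Tx)$. Writing out $J(\Gamma_{-2}(f\circ T))$ as an integral over $\mathbb{R}^n$ and making the change of variables $y = Tx$ produces a Jacobian factor $|\det T|^{-1}$, hence
\[
J(\Gamma_{-2}(f\circ T)) = |\det T|^{-1}\, J(\Gamma_{-2}f).
\]

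For the polar level-set factor, I would first use (\ref{affine-legendre}) to write $(f\circ T)^\circ(y) = e^{-(\varphi\circ T)^*(y)} = e^{-\varphi^*(T^{-t}y)} = f^\circ(T^{-t}y)$. Rewriting the super-level set $K_t((f\circ T)^\circ) = \{y : f^\circ(T^{-t}y)\geq t\}$ by the substitution $z = T^{-t}y$ identifies it with $T^t K_t(f^\circ)$, so
\[
|K_t((f\circ T)^\circ)| = |\det T^t|\, |K_t(f^\circ)| = |\det T|\, |K_t(f^\circ)|.
\]
Multiplying the two displays cancels $|\det T|$ and yields the claimed identity.

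There is no real obstacle here, since the argument is a bookkeeping exercise built on Property \ref{affine} and (\ref{affine-legendre}). The only point worth flagging is that Definition \ref{Def.LYZEforLC} requires $\varphi^*>0$ off the origin; one should check that this hypothesis is preserved by $T$, which is immediate because $(\varphi\circ T)^*(y)=\varphi^*(T^{-t}y)$ and $T^{-t}y\neq o$ whenever $y\neq o$. With that minor verification noted, the corollary is immediate.
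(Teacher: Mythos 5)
Your argument is correct and is essentially the paper's own proof: both use (\ref{affine-legendre}) to identify $K_t((f\circ T)^\circ)$ with $T^t K_t(f^\circ)$ and Property \ref{affine} to get $J(\Gamma_{-2}(f\circ T))=|\det T|^{-1}J(\Gamma_{-2}f)$, so the determinant factors cancel. Your version is in fact slightly cleaner, writing $|\det T|$ where the paper is sloppy about the sign and noting that the hypothesis $\varphi^*>0$ off the origin is preserved under $T$.
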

\begin{proof}
By (\ref{affine-legendre}), one has for any $T\in GL(n)$,
\begin{align*}
K_t((f\circ T)^\circ)&=\{x\in\mathbb{R}^n: f^\circ(Tx)\ge t\}
=\{x\in\mathbb{R}^n:e^{-\varphi^*{(T^{-t}x)}}\ge t\}\\
&=\{T^ty\in\mathbb{R}^n:e^{-\varphi^*{(y})}\ge t\}=T^t(K_t(f^\circ)).
\end{align*}
Property (\ref{affine}) implies that
$$
J(\Gamma_{-2}(f\circ T))=J((\Gamma_{-2}f)\circ T)
=\int_{\mathbb{R}^n}(\Gamma_{-2}f)(Tx)dx=(\det T)^{-1}J(\Gamma_{-2}f).
$$
Hence
$$
|K_t((f\circ T)^\circ)|J(\Gamma_{-2}(f\circ T))=|(K_t(f^\circ))|J(\Gamma_{-2}f).
$$
This completes the proof.
\end{proof}

When $f=e^{-\frac{\|x\|_K^2}{2}}$ with $K\in \mathcal{K}_o^n$, $\Gamma_{-2}f(x)$ can be calculated precisely.
\begin{property}\label{Property.|x|K}
Let $K\in \mathcal{K}_o^n$. If $f=e^{-\frac{\|x\|_K^2}{2}}$, then
\begin{eqnarray*}\label{}
\Gamma_{-2}f(x)=e^{-\frac{\|x\|_{\Gamma_{-2}K}^2}{2}}.
\end{eqnarray*}
\end{property}

\begin{proof}
The normalized cone measure $\sigma_K$ of $K$ 
is defined  by
$$
d\sigma_{K}(z)=\frac{z\cdot n_{K}(z)}{n|K|}d\mathcal{H}^{n-1}(z) \quad \text{for}\quad z\in \partial K.
$$
For $y\in \mathbb{R}^n$,  we write $y=rz$ with $z\in \partial K$, then
\begin{align}\label{dy=dz}
dy=n|K|r^{n-1}drd\sigma_{K}(z).
\end{align}
Together with the fact that  $\left(\frac{\|x\|_{K}^2}{2}\right)^\ast=\frac{\|x\|_{K^\circ}^2}{2}$ for any $x\in\mathbb{R}^n$, one has
\begin{eqnarray*}
 &&\int_{\mathbb{R}^n}|x\cdot\nabla \varphi(y)|^2\varphi^*(\nabla \varphi(y))^{-1}e^{-\varphi(y)}dy\\
&&\quad\quad=\int_{\mathbb{R}^n} |x\cdot \|y\|_{K} \nabla  \|y\|_{K} |^2\left(\tfrac{\| \|y\|_{K} \nabla  \|y\|_{K} \|_{K^{\circ}}^2}{2}\right)^{-1}e^{-\tfrac{\|y\|_K^2}{2}}dy\\
&&\quad\quad=\int_{\mathbb{R}^n} |x\cdot  \nabla  \|y\|_{K} |^2\left(\tfrac{\|  \nabla  \|y\|_{K} \|_{K^{\circ}}^2}{2}\right)^{-1}e^{-\tfrac{\|y\|_K^2}{2}}dy\\
&&\quad\quad=2n|K|\int_0^{\infty}\int_{\partial K} r^{n-1}|x\cdot\nabla  \|z\|_{K}|^2\left(\|  \nabla  \|z\|_{K} \|_{K^{\circ}}^2\right)^{-1}e^{-\frac{r^2}{2}}drd\sigma_{K}(z)\\
&&\quad\quad=2^{\frac{n}{2}}\Gamma\left(\tfrac{n}{2}\right)n|K|\int_{\partial K}|x\cdot\nabla  \|z\|_{K}|^2\left(\|  \nabla  \|z\|_{K} \|_{K^{\circ}}^2\right)^{-1}d\sigma_{K}(z).
\end{eqnarray*}
By the fact that $\nabla \|z\|_{K}=\frac{n_{K}(z)}{\|n_{K}(z)\|_{K^{\circ}}}$ for $z\in\partial K$ (see \cite[Remark 1.7.14]{SchneiderR2014}), (\ref{Eq.Minkowski-support-radial}) and (\ref{radial-LYZ-ellipsoid}), we have
\begin{eqnarray*}
 &&n|K|\int_{\partial K}|x\cdot\nabla  \|z\|_{K}|^2\left(\|  \nabla  \|z\|_{K} \|_{K^{\circ}}^2\right)^{-1}d\sigma_{K}(z)\\
&&\quad\quad=n|K|\int_{\partial K}|x\cdot\nabla  \|z\|_{K}|^2d\sigma_{K}(z)\\
&&\quad\quad=n|K|\int_{\partial K}\left|x\cdot\tfrac{n_{K}(z)}{\|n_{K}(z)\|_{K^{\circ}}}\right|^2d\sigma_{K}(z)\\
&&\quad\quad=\int_{\partial K}\left|x\cdot n_{K}(z)\right|^2(z\cdot n_K(z))^{-1}d\mathcal{H}^{n-1}(z)\\
&&\quad\quad=|K|\rho_{\Gamma_{-2}K}(x)^{-2}\\
&&\quad\quad=|K|\|x\|_{\Gamma_{-2}K}^{2}.
\end{eqnarray*}
By using (\ref{definition of the first variation}) and (\ref{dy=dz}), we have
\begin{align*}
\delta J(f,f)\nonumber
&=\frac{1}{2}\int_{\mathbb{R}^n}{\| \|y\|_{K} \nabla  \|y\|_{K} \|_{K^{\circ}}^2}e^{-\tfrac{\|y\|_K^2}{2}}dy\\\nonumber
&=\frac{n|K|}{2}\int_0^\infty r^{n+1}e^{-\frac{r^2}{2}} dr\int_{\partial K}\|  \nabla  \|z\|_{K} \|_{K^{\circ}}^2d\sigma_K(z)\\
&=2^{\frac{n}{2}-2}\Gamma(\tfrac{n}{2})n^2|K|.
\end{align*}
Hence
\begin{eqnarray*}
 -\log \Gamma_{-2}f(x)=\frac{\|x\|_{\Gamma_{-2}K}^{2}}{2}.
\end{eqnarray*}
This completes the proof.
\end{proof}

When $f$ is the Gaussian function $e^{-\frac{|x|^2}{2}}$, the following result can be obtained directly  from Property \ref{Property.|x|K} and the fact that $\Gamma_{-2} B^n=B^n$.
\begin{cor}
If $f=\gamma_n=e^{-\frac{|x|^2}{2}}$, then
\begin{eqnarray*}\label{}
\Gamma_{-2}\gamma_n=\gamma_n.
\end{eqnarray*}
\end{cor}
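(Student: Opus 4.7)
The plan is to obtain this corollary as an immediate specialization of Property \ref{Property.|x|K} to $K = B^n$. Since $\|x\|_{B^n} = |x|$, one has $\gamma_n(x) = e^{-|x|^2/2} = e^{-\|x\|_{B^n}^2/2}$, so Property \ref{Property.|x|K} applies with $K = B^n$ and yields
\[
\Gamma_{-2}\gamma_n(x) = e^{-\|x\|_{\Gamma_{-2}B^n}^2/2}.
\]
Thus the whole statement reduces to the identification $\Gamma_{-2}B^n = B^n$.

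To verify $\Gamma_{-2}B^n = B^n$, I would go directly to the radial definition (\ref{radial-LYZ-ellipsoid}). With $K = B^n$, one has $n_{B^n}(x) = x$ and $h_{B^n}(x) = 1$ for $x \in S^{n-1}$, so for each $u \in S^{n-1}$
\[
\rho_{\Gamma_{-2}B^n}(u)^{-2} = \frac{1}{\omega_n}\int_{S^{n-1}}|u \cdot x|^2\, d\mathcal{H}^{n-1}(x).
\]
By rotational invariance this integral is independent of $u$, and the standard second-moment identity on the sphere evaluates the constant to $\omega_n$. Hence $\rho_{\Gamma_{-2}B^n}(u) \equiv 1$ on $S^{n-1}$, i.e.\ $\Gamma_{-2}B^n = B^n$.

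Substituting this back, $\|x\|_{\Gamma_{-2}B^n} = |x|$, and therefore $\Gamma_{-2}\gamma_n(x) = e^{-|x|^2/2} = \gamma_n(x)$, completing the proof. There is no genuine obstacle here: the corollary is essentially a sanity check of the LYZ ellipsoid construction at its natural Gaussian fixed point, and all the real work was already done in the homogeneity computation inside Property \ref{Property.|x|K}. The only independent input is the classical rotational-symmetry identification $\Gamma_{-2}B^n = B^n$, whose verification is a one-line application of the definition together with the symmetry of the surface measure on $S^{n-1}$.
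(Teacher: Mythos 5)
Your proof is correct and follows essentially the same route as the paper, which also obtains the corollary by specializing Property \ref{Property.|x|K} to $K=B^n$ together with the fact that $\Gamma_{-2}B^n=B^n$. The only difference is that you verify $\Gamma_{-2}B^n=B^n$ explicitly from (\ref{radial-LYZ-ellipsoid}) (correctly, via the second-moment identity $\int_{S^{n-1}}|u\cdot x|^2\,d\mathcal{H}^{n-1}(x)=\omega_n$), whereas the paper simply cites this as a known fact.
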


\section{LYZ polar inequality for log-concave functions}\label{Sec.5}
In this section, we will establish the LYZ polar inequality for log-concave functions. The following lemmas are needed.
\begin{lemma}\label{Le.contain}
Let $f=e^{-\varphi}\in{\rm LC}_n$ and $K_t(f)=\{x\in\mathbb{R}^n:f(x)\ge t\}$ be the upper level set of $f$. If $\nu$ is a finite, positive Borel  measure on $\mathbb{R}^n$ and $\varphi^\ast>0$ on $\mathbb{R}^n\backslash\{o\}$, then
\begin{eqnarray*}\label{}
\int_{\mathbb{R}^n}x\varphi^\ast(x)^{-1}d\nu(x)\in r K_t(f^\circ),
\end{eqnarray*}
where $-\log t=\sup\{\varphi^\ast(x/\varphi^\ast(x)): {x\in{\rm supp}(\nu)}\}$ and $r=|\nu|$.
\end{lemma}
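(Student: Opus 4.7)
The statement asks for containment in $rK_t(f^\circ)$, and since $K_t(f^\circ)=\{y\in\mathbb{R}^n:\varphi^\ast(y)\le -\log t\}$ by the very definition of $f^\circ=e^{-\varphi^\ast}$ and of the upper level set, the inclusion is equivalent to the scalar inequality
\begin{equation*}
\varphi^\ast\!\left(\frac{1}{r}\int_{\mathbb{R}^n}\frac{x}{\varphi^\ast(x)}\,d\nu(x)\right)\le -\log t.
\end{equation*}
So the plan is to reduce the whole claim to this one inequality and prove it by Jensen.

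First I would rewrite the integral as an expectation against a probability measure: since $r=|\nu|<\infty$, the measure $P:=\nu/r$ is a probability measure on $\mathbb{R}^n$, and
\begin{equation*}
\frac{1}{r}\int_{\mathbb{R}^n}x\,\varphi^\ast(x)^{-1}\,d\nu(x)=\int_{\mathbb{R}^n}\frac{x}{\varphi^\ast(x)}\,dP(x),
\end{equation*}
which makes sense because $\varphi^\ast>0$ on $\mathbb{R}^n\setminus\{o\}$ by hypothesis (the origin, if charged by $\nu$, contributes the zero vector to the integrand and can be discarded from the expectation without affecting the argument). Then, because $\varphi^\ast$ is a convex function (Legendre transforms are always convex), Jensen's inequality applied to the $\mathbb{R}^n$-valued random variable $x\mapsto x/\varphi^\ast(x)$ yields
\begin{equation*}
\varphi^\ast\!\left(\int_{\mathbb{R}^n}\frac{x}{\varphi^\ast(x)}\,dP(x)\right)\le \int_{\mathbb{R}^n}\varphi^\ast\!\left(\frac{x}{\varphi^\ast(x)}\right)dP(x).
\end{equation*}

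Next I would bound the right-hand side by the essential supremum of its integrand over $\operatorname{supp}(\nu)$. Since $P$ is a probability measure,
\begin{equation*}
\int_{\mathbb{R}^n}\varphi^\ast\!\left(\frac{x}{\varphi^\ast(x)}\right)dP(x)\le \sup_{x\in\operatorname{supp}(\nu)}\varphi^\ast\!\left(\frac{x}{\varphi^\ast(x)}\right)=-\log t,
\end{equation*}
the last equality being exactly the defining equation for $t$ in the statement. Combining the two displays gives the required inequality and hence the desired containment.

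The argument is essentially one application of Jensen's inequality, so I do not expect any deep obstacle. The only points requiring care are technical: verifying that the integrand $x/\varphi^\ast(x)$ is $\nu$-integrable so that Jensen applies (which follows from the implicit finiteness of the integral appearing in the statement together with $\varphi^\ast>0$ off the origin), and handling a possible atom of $\nu$ at the origin where $x/\varphi^\ast(x)$ is not literally defined; this is harmless since one sets the value to $0$ there, matching the value of the original integrand $x\varphi^\ast(x)^{-1}$ in a natural limiting sense.
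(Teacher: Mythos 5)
Your proposal is correct and follows essentially the same route as the paper: the paper's proof is precisely Jensen's inequality for the convex function $\varphi^\ast$ applied to the normalized measure $\nu/r$, followed by bounding the resulting integral by the supremum $-\log t$ over ${\rm supp}(\nu)$. Your additional remarks on integrability and a possible atom at the origin are sensible refinements but do not change the argument.
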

\begin{proof}

Let
\begin{eqnarray*}\label{}
x_0=\int_{\mathbb{R}^n}x\varphi^\ast(x)^{-1}d\nu(x).
\end{eqnarray*}
By the convexity of $\varphi^\ast$, we have
\begin{eqnarray*}\label{}
\varphi^\ast\left(\frac{x_0}{r}\right)\leq\int_{\mathbb{R}^n}\varphi^\ast\left(\frac{x}{\varphi^\ast(x)}\right)\frac{d\nu(x)}{r}
\le -\log t.
\end{eqnarray*}
Hence
\begin{eqnarray*}\label{}
x_0\in r\left\{x\in \mathbb{R}^n: f^\circ(x)\geq t\right\}.
\end{eqnarray*}
Then $x_0\in r K_t(f)$.
\end{proof}
Note that $-\log t$ may reach infinity, but it is often bounded for many log-concave functions, such as $f=e^{-\varphi}\in {\rm LC}_n$ with
$\varphi^{\ast} (x)>0$ for $x\in  \mathbb{R}^n\backslash\{o\}$ and ${\rm dom}(\varphi^\ast)=\mathbb{R}^n$.
We first need to show  $J(f^\circ)<\infty$.
For the Legendre transformation of $\varphi$,
$$
\varphi^\ast(y)=\sup_{x\in\mathbb{R}^n} x\cdot y-\varphi(x)\ge |y|-\varphi\left(\frac{y}{|y|}\right),
$$
 we have $\varphi^\ast(y)\to\infty$ as $|y|\to\infty$ and  hence $J(f^\circ)<\infty$. Moreover, $f^\circ\in{\rm LC}_n$. \cite[Lemma 2.5]{CF13} shows that there exists two constants $a>0$ and $b$ such that
\begin{align}
\varphi^\ast(x)\ge a|x|+b
\end{align}
for any $x\in\mathbb{R}^n$. Hence
$$
\lim_{|x|\to\infty}\frac{|x|}{\varphi^\ast(x)}\le\lim_{|x|\to\infty}\frac{|x|}{a|x|+b}
=\frac{1}{a},
$$
that is, for any $\varepsilon>0$, there exists a constant $M$ such that ${x}/{\varphi^\ast(x)}\in \left(\frac{1}{a}+\varepsilon\right)B^n$
for  $x\in (M B^n)^c$ and  the unit ball $B^n$ in $\mathbb{R}^n$. Then $\varphi^\ast\left({x}/{\varphi^\ast(x)}\right)$ is bounded for any $x\in (M_0 B^n)^c$ as the continuity of $\varphi^\ast$. Set $m=\min\{\varphi^\ast(x):x\in M B^n\}$, then
$x/\varphi^\ast(x)\in (M/m)B^n$ for any $x\in M B^n$. Hence  $\varphi^\ast\left({x}/{\varphi^\ast(x)}\right)$ is bounded on $MB^n$ and $-\log t$ is a finite constant.

The following lemma is the key for the equality condition of our main inequality.
\begin{lemma}\label{Le.eqcase}
Let $\varphi$ be convex function and {\rm dom}$(\varphi)=\mathbb{R}^n$ and $\varphi(x)>\varphi(o)$ for all $x\in\mathbb{R}^n\backslash\{o\}$. Suppose that $\varphi$ satisfies the equality
\begin{align}\label{Eq.MinkowskiF}
\varphi(x)=x\cdot \nabla \varphi(x)-c
\end{align}
a.e. on $\mathbb{R}^n$, where $c$ is a constant. Then $\varphi(x)=\|x\|_K-c$ for some $K\in\mathcal{K}_o^n$.
\end{lemma}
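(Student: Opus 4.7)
\textbf{Proof plan for Lemma \ref{Le.eqcase}.}

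The plan is to reduce the given Euler-type identity to the Euler relation for positively $1$-homogeneous functions, conclude that $\varphi+c$ is $1$-homogeneous, and then recognize it as a Minkowski functional. Set $u(x) = \varphi(x) + c$. The hypothesis becomes
\begin{equation*}
u(x) = x\cdot\nabla u(x)\qquad\text{a.e. on }\mathbb{R}^n.
\end{equation*}
Since $\operatorname{dom}(\varphi)=\mathbb{R}^n$, the function $u$ is finite and convex on $\mathbb{R}^n$, hence locally Lipschitz and differentiable on a set $D$ of full Lebesgue measure.

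The first step is to establish positive $1$-homogeneity of $u$. For fixed $x\neq o$, define $g(t) = u(tx)/t$ for $t>0$. By Fubini, for almost every $x$ the ray $\{tx:t>0\}$ meets $D$ in a set of full one-dimensional measure, so $g$ is locally absolutely continuous and differentiable a.e. Wherever the Euler identity applies at $tx$, we get $u(tx) = tx\cdot\nabla u(tx)$, and a direct computation gives
\begin{equation*}
g'(t) \;=\; \frac{x\cdot\nabla u(tx)}{t} - \frac{u(tx)}{t^2} \;=\; \frac{tx\cdot\nabla u(tx) - u(tx)}{t^2} \;=\; 0.
\end{equation*}
Therefore $g$ is constant, so $u(tx)=t u(x)$ for all $t>0$ on a dense set of directions $x$; by continuity of $u$ this extends to all $x\in\mathbb{R}^n$ and all $t>0$.

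Having $u$ convex and positively $1$-homogeneous, I would then read off the structural consequences. Letting $t\to 0^+$ in $u(tx)=tu(x)$ gives $u(o)=0$, hence $\varphi(o)=-c$. The hypothesis $\varphi(x)>\varphi(o)$ for $x\neq o$ therefore translates to $u(x)>0$ for $x\neq o$. Continuity of $u$ together with $u(o)=0$ gives a neighborhood of the origin where $u\leq 1$, while compactness of $S^{n-1}$ and positivity of $u$ there give a constant $m>0$ with $u(x)\geq m|x|$ for all $x$ (by $1$-homogeneity). Consequently $K:=\{x\in\mathbb{R}^n : u(x)\leq 1\}$ is convex (by convexity of $u$), bounded (by the lower bound $u\geq m|\cdot|$), and contains the origin in its interior, so $K\in\mathcal{K}_o^n$. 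The standard identification of a convex positively $1$-homogeneous function with the Minkowski functional of its unit sublevel set gives $u(x)=\|x\|_K$, and hence $\varphi(x)=\|x\|_K-c$, as desired.

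The only delicate point I expect is the passage from the almost-everywhere Euler identity to the pointwise $1$-homogeneity relation, i.e., verifying that $g'(t)=0$ holds on a set large enough that $g$ is constant. The reasoning above (Fubini on rays plus local absolute continuity from convexity) handles it cleanly, but this is the step that requires care; everything afterwards is classical Minkowski-functional theory.
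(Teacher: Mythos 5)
Your proposal is correct, and it reaches the conclusion by a genuinely different argument at the key step. The paper first reduces (implicitly) to $c=0$ and proves positive $1$-homogeneity by contradiction: at a differentiability point $x$ with $\varphi(\lambda x)\neq\lambda\varphi(x)$ it invokes the subgradient inequality $\varphi(y)-\varphi(x)\ge\nabla\varphi(x)\cdot(y-x)$, argues that it is strict at $y=o$, and so obtains $\varphi(x)<x\cdot\nabla\varphi(x)$, contradicting the Euler-type identity; it then extends homogeneity from differentiability points to all of $\mathbb{R}^n$ by continuity, identifies the sublinear function $\varphi$ with a support function $h_K$, and writes $h_K=\|\cdot\|_{K^\circ}$. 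You instead substitute $u=\varphi+c$ at the outset (which cleanly disposes of the constant) and integrate the identity along rays: $g(t)=u(tx)/t$ is locally Lipschitz, $g'(t)=0$ at a.e.\ $t$ by Fubini applied to the full-measure set where the Euler identity holds, hence $g$ is constant, giving $u(tx)=tu(x)$; you then identify $u$ with the Minkowski functional of its sublevel set $K=\{u\le1\}$, using $u>0$ off the origin for boundedness and $u(o)=0$ for $o\in\operatorname{int}K$. What your route buys is a transparent treatment of the two points the paper glosses over: the passage from the almost-everywhere hypothesis to everywhere homogeneity (your Fubini-plus-absolute-continuity argument replaces the paper's somewhat opaque claim that the subgradient inequality is strict at $y=o$) and the general constant $c$. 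The paper's route is shorter and stays entirely within elementary convexity, and its final identification via support functions versus your sublevel-set construction is only a cosmetic difference, since $h_K=\|\cdot\|_{K^\circ}$.
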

\begin{proof}
Firstly, we consider the case when $c=0$, in which case we claim that
\begin{align}\label{linear}
\varphi(\lambda x)=\lambda\varphi(x)
\end{align}
for all $x\in\mathbb{R}^n$ and any $\lambda\ge 0$. Since $|\nabla\varphi(x)|$ is finite near the origin, hence $|\varphi(o)|=\lim_{|x|\to 0}|x\cdot\nabla\varphi(x)|=0$, which implies that (\ref{linear}) holds for $\lambda=0$. Otherwise, if $\varphi(\lambda x)\neq\lambda\varphi(x)$ for some points $x\neq o$, where $\varphi$ is differentiable, then by the convexity of  $\varphi(x)$, one has
\begin{align*}
\varphi(y)-\varphi(x)\ge\nabla\varphi(x)\cdot(y-x)
\end{align*}
for all $y \in\mathbb{R}^n$.
Note that the above inequality is strictly when $y=o$ due to the assumptions $\varphi(x)>\varphi(o)$ and $\varphi(\lambda x)\neq\lambda\varphi(x)$.
Then by taking $y=o$, we have $\varphi(x)<\nabla\varphi(x)\cdot x$, which  contradicts with (\ref{Eq.MinkowskiF}). Thus
$\varphi(\lambda x)=\lambda\varphi(x)$ at every differentiable points. Moreover, this also holds on whole $\mathbb{R}^n$ as $\varphi$ is continuous and $\varphi$ is differentiable almost everywhere.
Combined with the convexity of $\varphi$, one sees $\varphi$ is sublinear.
By the fact that every sublinear function from $\mathbb{R}^n$ to $\mathbb{R}$ is a support function of a convex body, we have $\varphi(x)=h_K(x)$ for some convex body $K$.
Since $\varphi(x)>\varphi(o)=0$ for all $o\neq x\in\mathbb{R}^n$, one can see that $K$ is a convex body that contains the origin in its interior. In this case, one has $\varphi(x)=h_{K}(x)=\|x\|_{K^\circ}$ and (\ref{Eq.MinkowskiF}) follows.
\end{proof}

For a real function $\tau : (0,\ +\infty) \rightarrow R$, let
\begin{eqnarray}\label{def.tau}
\int_0^{\tau(t)}e^{-l}dl=\frac{1}{\sqrt{\pi}}\int_{-\infty}^te^{-l^2}dl.
\end{eqnarray}
The  transformation $T: \mathbb{R}^{n+1}\rightarrow\mathbb{R}^{n+1}$ is defined by
\begin{eqnarray*}\label{}
Ty=\int_{\mathbb{R}^n}h(x)\frac{\tau(y\cdot h(x))}{h_i(x)}d\nu(x),
\end{eqnarray*}
for  $y\in\mathbb{R}^{n+1}$, with $h_i=h\cdot e_i$  for $i=1,\cdots,n+1$.

By the Ball-Barth inequality, we obtain the following lemma.

\begin{lemma}\label{Le.key}
Let $(\mathbb{R}^n,\nu)$ be a Borel measure space and $h: \mathbb{R}^n\to S^n$ be an isotropic embedding of the Borel measure space $(\mathbb{R}^n,\nu)$ into $S^n$.  Then
\begin{align}\label{Key.In}
(n+1)^{\frac{n+1}{2}}\le\int_{T(\mathbb{R}^{n+1})}e^{-x_i}dx,
\end{align}
with equality if and only if $h_i$ is constant on ${\rm supp} (\nu)$, and there exists a $C>0$ with respect to $y$ such that
$$
\prod_{j=1}^{n+1}{\tau^\prime(y\cdot h(x_i))}=C
$$
for $x_1,\cdots, x_{n+1} \in {\rm supp} (\nu)$ with $h(x_1), . . . , h(x_{n+1})$  linearly independent.

\end{lemma}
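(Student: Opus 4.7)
The plan is to realize the right-hand side of (\ref{Key.In}) as a weighted Gaussian integral via a change of variables under $T$, lower-bound the resulting Jacobian with the Ball-Barthe inequality for isotropic embeddings (Proposition~\ref{Pro.Ball-Barthe}), and close with Jensen's inequality applied to the components of $h$. The defining relation of $\tau$ will convert the combination $\tau'(t)e^{-\tau(t)}$ into a Gaussian weight $\pi^{-1/2}e^{-t^2}$, after which the isotropic embedding property collapses the remaining terms.

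First I would differentiate $T$ componentwise to obtain
\begin{align*}
DT(y)=\int_{\mathbb{R}^n}\frac{\tau'(y\cdot h(x))}{h_i(x)}\,h(x)\otimes h(x)\,d\nu(x),
\end{align*}
which is symmetric and strictly positive definite (since $\tau'>0$). In fact $T$ is a gradient map, $T=\nabla F$ with $F(y)=\int\Theta(y\cdot h(x))/h_i(x)\,d\nu(x)$ for $\Theta'=\tau$, and $F$ is strictly convex, so $T$ is injective. The area formula then gives the equality
\begin{align*}
\int_{T(\mathbb{R}^{n+1})}e^{-x_i}\,dx=\int_{\mathbb{R}^{n+1}}e^{-(Ty)_i}\det DT(y)\,dy.
\end{align*}
Applying Proposition~\ref{Pro.Ball-Barthe} to $DT(y)$ with $l(x)=\tau'(y\cdot h(x))/h_i(x)$ yields the pointwise lower bound $\det DT(y)\ge\exp\{\int\log[\tau'(y\cdot h(x))/h_i(x)]\,d\nu(x)\}$.

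Next I would invoke the defining relation (\ref{def.tau}), whose differentiation gives the key identity $\tau'(t)e^{-\tau(t)}=\pi^{-1/2}e^{-t^2}$. Since $(Ty)_i=\int\tau(y\cdot h(x))\,d\nu(x)$ by the definition of $T$, combining the two estimates and using (\ref{Eq.iso-emb}) yields
\begin{align*}
e^{-(Ty)_i}\det DT(y)\ge\pi^{-(n+1)/2}\exp\Bigl\{-\int|y\cdot h(x)|^2\,d\nu(x)-\int\log h_i(x)\,d\nu(x)\Bigr\}.
\end{align*}
The isotropic embedding identity (\ref{def.eq.iso-em}) extends by bilinearity to $\int|y\cdot h(x)|^2\,d\nu(x)=|y|^2$ for every $y\in\mathbb{R}^{n+1}$, so the $y$-integration reduces to the Gaussian $\int_{\mathbb{R}^{n+1}}e^{-|y|^2}\,dy=\pi^{(n+1)/2}$, cancelling the $\pi^{-(n+1)/2}$ factor and leaving the lower bound $\exp\{-\int\log h_i(x)\,d\nu(x)\}$.

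Finally I would apply Jensen's inequality for the convex function $-\log$ to $h_i^2$: since $\nu/(n+1)$ is a probability measure and $\int h_i(x)^2\,d\nu(x)=|e_i|^2=1$ by the isotropic condition,
\begin{align*}
-\int\log h_i(x)\,d\nu(x)=-\tfrac{1}{2}\int\log h_i(x)^2\,d\nu(x)\ge-\tfrac{n+1}{2}\log\Bigl(\tfrac{1}{n+1}\int h_i(x)^2\,d\nu(x)\Bigr)=\tfrac{n+1}{2}\log(n+1),
\end{align*}
and exponentiating gives (\ref{Key.In}). For the equality characterization, strict convexity of $-\log$ forces $h_i^2$ to be $\nu$-a.e.\ constant on ${\rm supp}(\nu)$, so $h_i$ is constant there; the Ball-Barthe equality clause forces $\prod_j\tau'(y\cdot h(x_j))/h_i(x_j)$ to be constant in $(x_1,\ldots,x_{n+1})$ over linearly independent tuples in ${\rm supp}(\nu)$ for each fixed $y$, and combined with the constancy of $h_i$ this reduces to the stated condition $\prod_j\tau'(y\cdot h(x_j))=C$. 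The main obstacle is the bookkeeping in the third step: checking carefully that $\tau'(t)e^{-\tau(t)}=\pi^{-1/2}e^{-t^2}$ is inserted correctly inside the logarithm and that the two isotropic identities $\nu(\mathbb{R}^n)=n+1$ and $\int h_i^2\,d\nu=1$ consume the remaining constants in precisely the right way.
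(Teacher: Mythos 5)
Your proposal is correct and follows essentially the same route as the paper: differentiate the defining relation of $\tau$ to get $\tau'(t)e^{-\tau(t)}=\pi^{-1/2}e^{-t^2}$, bound $\det DT(y)$ by the Ball--Barthe inequality for isotropic embeddings, handle the $\log h_i$ term by the $L_0$--$L_2$ (Jensen) comparison using $\int h_i^2\,d\nu=1$ and $\nu(\mathbb{R}^n)=n+1$, and integrate the resulting Gaussian in $y$ before changing variables under $T$. The only difference is that you justify the change of variables by noting $T=\nabla F$ with $F$ strictly convex (hence $T$ injective), a point the paper leaves implicit, and your equality analysis matches the paper's.
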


\begin{proof}
Deriving both sides of (\ref{def.tau}) with respect to $t$, we have
\begin{eqnarray*}\label{}
-\tau(t)+\log \tau'(t)=-\log\sqrt{\pi}-t^2.
\end{eqnarray*}
Taking $t=y\cdot h(x)$, one has
\begin{eqnarray}\label{eq.1}
-|y\cdot h(x)|^2=\log\sqrt{\pi}-\tau(y\cdot h(x))+\log\frac{\tau'(y\cdot h(x))}{h_i(x)}+\log h_i(x).
\end{eqnarray}
Now we integrate (\ref{eq.1}) over all $x\in \mathbb{R}^n$ with respect to the measure $d\nu$.
From the definition of isotropic embedding (\ref{def.eq.iso-em}), the integral of the left hand side on (\ref{eq.1}) is equal to
\begin{eqnarray}\label{In.1}
-\int_{\mathbb{R}^n}|y\cdot h(x)|^2|d\nu(x)=-|y|^2.
\end{eqnarray}
Next we deal with the integral of the last term on the right hand side of (\ref{eq.1}). From (\ref{Eq.iso-emb}), one can see that the measure $\frac{1}{n+1}d\nu$ is a probability measure. By the fact that the $L_0$-mean of a function is dominated by its $L_2$-mean on a probability space and (\ref{def.eq.iso-em}), one has
\begin{eqnarray}\label{In.1.1}
\exp\left(\frac{1}{n+1}\int_{\mathbb{R}^n}\log h_i(x)d\nu(x)\right)\leq \left(\frac{1}{n+1}\int_{\mathbb{R}^n}|h_{i}(x)|^2d\nu(x)\right)^{\frac{1}{2}}
=\left(\frac{1}{n+1}\right)^{\frac{1}{2}},
\end{eqnarray}
with equality if and only if $h_i(x)$ is constant on ${\rm supp}(\nu)$. Hence
\begin{eqnarray*}\label{}
\int_{\mathbb{R}^n}\log h_i(x)d\nu(x)\leq\log\left(\frac{1}{n+1}\right)^{\frac{n+1}{2}}.
\end{eqnarray*}

Calculating the derivative of $Ty$ with respect to $y$, we have
\begin{eqnarray*}\label{}
dTy=\int_{\mathbb{R}^n}h(x)\otimes h(x)\frac{\tau'(y\cdot h(x))}{h_i(x)}d\nu(x).
\end{eqnarray*}
From Proposition \ref{Pro.Ball-Barthe}, we infer that
\begin{eqnarray}\label{In.1.2}
\det(dTy)&\geq&\exp\left\{\int_{\mathbb{R}^n}\log \frac{\tau'(y\cdot h(x))}{h_i(x)}d\nu(x)\right\},
\end{eqnarray}
with equality if and only if
\begin{eqnarray*}\label{}
\prod_{j=1}^{n+1} \frac{\tau'(y\cdot h(x_j))}{h_i(x_j)}
\end{eqnarray*}
is constant for $x_1,\cdots,x_{n+1}\in {\rm supp}(\mu)$ such that $h(x_1),\cdots,h(x_{n+1})$ are linearly independent.

Combining  (\ref{eq.1}), (\ref{In.1}), (\ref{In.1.1}), (\ref{In.1.2}) and the fact that $\nu(\mathbb{R}^n)=n+1$, we have
\begin{align*}
\exp\{-|y|^2\}\le \left(\frac{\pi}{n+1}\right)^{\frac{n+1}{2}}\det(d(Ty))
\exp\{-e_{i}\cdot Ty\}.
\end{align*}
Integrating this inequality over all $y\in \mathbb{R}^{n+1}$ gives the desired inequality (\ref{Key.In}).%
\end{proof}

Now we are ready to prove our functional isoperimetric  inequality for log-concave functions.

\begin{thm}\label{mainthm}
Let $f=e^{-\varphi}\in{\rm LC}_n$ and $\mu_f$ be the surface area measure of $f$. Suppose that $\varphi^\ast(x)>0$ on $\mathbb{R}^n\backslash\{o\}$ and $-\log t=\sup\{\varphi^\ast(x/\varphi^\ast(x)):x\in {\rm supp} (\mu_f)\}$, then
\begin{eqnarray}\label{mainIn}
|K_{t}(f^\circ)|J(\Gamma_{-2}f)
\geq\frac{8^{\frac{n}{2}}(n+1)^{\tfrac{n+1}{2}}\Gamma(\frac{n}{2}+1)\omega_n}{n!n^n}.
\end{eqnarray}
Moreover, if $\varphi(x)>\varphi(o)$ for all $x\in\mathbb{R}^n\backslash\{o\}$, then equality holds if and only if $f(x)=e^{-{\|x\|_K}+1}$ and $K$ is a simplex whose centroid is at the origin.
\end{thm}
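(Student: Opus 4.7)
The plan is to mirror the proof of the classical LYZ polar inequality (\ref{In.LYZpolar}) from \cite{LYZ2010}, using the Ball--Barthe inequality for isotropic embeddings (Lemma \ref{Le.key}) as the analytic engine and the containment Lemma \ref{Le.contain} to translate integrals against $\mu_f$ into statements about the polar level set $K_t(f^\circ)$. The first reduction is normalization: by Property \ref{affine} together with the $GL(n)$-invariance of $|K_t(f^\circ)| J(\Gamma_{-2} f)$ (the Corollary following Property \ref{affine}), we may replace $f$ by $f \circ T$ for $T \in GL(n)$ chosen so that the symmetric positive definite matrix $M = [m_{ij}(f)]$ defining $\Gamma_{-2} f$ becomes a scalar multiple of the identity, $M = \lambda I_n$. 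In this frame
$$\Gamma_{-2} f(x) = e^{-\lambda |x|^2}, \qquad J(\Gamma_{-2} f) = (\pi/\lambda)^{n/2},$$
and
$$\int_{\mathbb{R}^n} y \otimes y\, \varphi^*(y)^{-1} d\mu_f(y) = \frac{8 \lambda \delta J(f,f)}{n^2}\, I_n =: c\, I_n.$$

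Next I would build a finite positive Borel measure $\nu$ on $\mathbb{R}^n$ and a continuous map $h = (h_1, \ldots, h_{n+1}): \mathbb{R}^n \to S^n$ of the form $h(x) = (\alpha(x) x, \beta(x))$ with $\alpha, \beta > 0$, such that $(\mathbb{R}^n, \nu)$ embeds isotropically into $S^n$ via $h$. The $n \times n$ block of the isotropy condition (\ref{def.eq.iso-em}) reduces to the identity produced in the first step provided $\alpha(x)^2 d\nu(x) \propto \varphi^*(x)^{-1} d\mu_f(x)$; the constraint $|h(x)| = 1$ pins down $\beta(x) = \sqrt{1 - \alpha(x)^2 |x|^2}$, and the centering condition $\int \alpha(x) x_i \beta(x) d\nu(x) = 0$ fixes the remaining free parameter. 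By (\ref{Eq.iso-emb}) the normalization then forces $\nu(\mathbb{R}^n) = n + 1$. Applying Lemma \ref{Le.key} with $i = n+1$ yields
$$(n+1)^{(n+1)/2} \le \int_{T(\mathbb{R}^{n+1})} e^{-x_{n+1}} dx,$$
where $T$ is the transformation in that lemma.

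The third step identifies the image $T(\mathbb{R}^{n+1})$. Directly from the definitions, $(Ty)_{n+1} = \int \tau(y \cdot h(x)) d\nu(x)$, while the projection of $Ty$ onto the first $n$ coordinates has the form $\int x\, w(x, y) \varphi^*(x)^{-1} d\mu_f(x)$ for a positive weight $w(x, y)$ whose total mass is controlled by $(Ty)_{n+1}$. Lemma \ref{Le.contain} then places this projection inside a scaled copy $r((Ty)_{n+1})\, K_t(f^\circ)$. Integrating $e^{-x_{n+1}}$ over the resulting containment set by Fubini produces $|K_t(f^\circ)|$ from the $n$-dimensional slices and a Gamma/Gaussian factor from the one-dimensional $x_{n+1}$-integral. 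After reinserting the normalization constants $c$, $\lambda$, and $\delta J(f,f)$, the factor $(\pi/\lambda)^{n/2} = J(\Gamma_{-2} f)$ emerges on the right-hand side, and regrouping delivers (\ref{mainIn}) with the explicit constant $8^{n/2}(n+1)^{(n+1)/2}\Gamma(n/2+1)\omega_n/(n!\,n^n)$.

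For equality, I would trace the saturation conditions backward. Equality in Lemma \ref{Le.key} forces $h_i$ constant on ${\rm supp}(\nu)$ and the $\tau'$-product to be constant on linearly independent $(n+1)$-tuples; equality in the Jensen step inside the proof of Lemma \ref{Le.contain} forces $\varphi^*(x/\varphi^*(x)) = -\log t$ on ${\rm supp}(\mu_f)$. Under the hypothesis $\varphi(x) > \varphi(o)$ for $x \neq o$, these combine to give $\varphi(x) = x \cdot \nabla \varphi(x) - c$ almost everywhere, and Lemma \ref{Le.eqcase} then yields $\varphi(x) = \|x\|_K - c$ for some $K \in \mathcal{K}_o^n$. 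Substituting $f = e^{-\|x\|_K + c}$ reduces the remaining equality conditions to those of (\ref{In.LYZpolar}), forcing $K$ to be a simplex with centroid at the origin and $c = 1$, i.e., $f(x) = e^{-\|x\|_K + 1}$. The main obstacle is the second and third steps: producing the explicit isotropic embedding whose transformed integral collapses to $|K_t(f^\circ)| J(\Gamma_{-2} f)$ with the correct constant is delicate, and it is exactly the normalization factor $n^2/(8 \delta J(f,f))$ built into Definition \ref{Def.LYZEforLC} that makes the sharp Gamma and Gaussian numerology line up to produce $8^{n/2}(n+1)^{(n+1)/2}\Gamma(n/2+1)\omega_n/(n!\,n^n)$.
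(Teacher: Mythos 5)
Your route is the same as the paper's: normalize by Property \ref{affine} so that $\Gamma_{-2}f$ is a Gaussian (equivalently, a suitable multiple of $(\varphi^\ast)^{-1}\mu_f$ is isotropic), lift this to an isotropic embedding into $S^n$ by appending a coordinate built from $\varphi^\ast$, apply Lemma \ref{Le.key}, trap $T(\mathbb{R}^{n+1})$ inside the cone $\bigcup_{r>0}c\,rK_t(f^\circ)\times\{r\}$ via Lemma \ref{Le.contain}, integrate $e^{-x_{n+1}}$, and settle equality through Lemma \ref{Le.eqcase} and the LYZ polar inequality (\ref{In.LYZpolar}). The problem is the step you yourself flag as ``delicate'' and leave unresolved: the actual construction of the embedding, and as described it is misstated. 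With $h(x)=(\alpha(x)x,\beta(x))$ and $\alpha(x)^2d\nu\propto\varphi^\ast(x)^{-1}d\mu_f(x)$, the unit-norm constraint only ties $\beta$ to $\alpha$; the conditions still to be verified are the $n$ off-diagonal (centering) equations and the $(n+1,n+1)$ diagonal entry of (\ref{def.eq.iso-em}). The centering condition is a system of $n$ scalar equations and cannot be arranged by tuning one ``remaining free parameter''; and you never address the last diagonal entry at all (your appeal to (\ref{Eq.iso-emb}) treats the total mass as an automatic consequence, whereas it encodes exactly that missing entry).

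What makes the construction work in the paper is the specific choice $\beta(x)\propto\alpha(x)\varphi^\ast(x)$, namely $\bar h(x)=\bigl(x,\tfrac{2}{n}\varphi^\ast(x)\bigr)/|h(x)|$ acting on the measure $|h|^2d\nu$ with $\nu=\tfrac{n^2}{4\,\delta J(f,f)}(\varphi^\ast)^{-1}\mu_f$: the cross terms then vanish because the barycenter of $\mu_f$ is at the origin ($\int y\,d\mu_f(y)=\int\nabla\varphi\,e^{-\varphi}dx=0$), and the last diagonal entry equals $1$ precisely because $\delta J(f,f)=\int\varphi^\ast d\mu_f$ by (\ref{definition of the first variation}); these two identities force the constant $2/n$ and hence the numerology in (\ref{mainIn}). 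Without them your embedding is not actually produced and the sharp constant cannot be extracted, so this is a genuine gap rather than a routine verification. A secondary remark on equality: no Jensen-equality analysis inside Lemma \ref{Le.contain} is needed (nor used in the paper); equality in Lemma \ref{Le.key} already gives $\varphi^\ast\equiv c_0$ on ${\rm supp}(\mu_f)$, hence $\varphi=\|\cdot\|_K-c_0$ by Lemma \ref{Le.eqcase}, and the point you gloss over is why $c_0=1$: for $c_0<1$ one gets $-\log t=+\infty$, so $|K_t(f^\circ)|=+\infty$ and the inequality is strict, while for $c_0\ge1$ the inequality reduces to (\ref{c_0}) and only $c_0=1$ with $K$ a simplex whose centroid is at the origin yields equality.
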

\vskip 0.2cm
\begin{proof}
From Property \ref{affine}, (\ref{Eq.(TK)o}) and the fact that $\sqrt{-\log \Gamma_{-2}f(x)}$ is a Minkowski function of an ellipsoid, we only need to prove (\ref{mainIn}) holds for $\Gamma_{-2}f(x)=e^{-{|x|^2}/{2}}$. This means that the measure
$$
\nu(\cdot)=\frac{n^2}{4\delta J(f,f)}(\varphi^*)^{-1}\mu_f(\cdot)
$$
is isotropic.
Let $c_n={2/n}$. For any $x\in\mathbb{R}^n$, define a function $h: \mathbb{R}^{n}\rightarrow\mathbb{R}^{n+1}$ by
$
h(x)=(x,c_n\varphi^\ast(x)),
$
and $\bar{h}: \mathbb{R}^{n}\rightarrow S^n$ by
$
\bar{h}={h}/{|h|}.
$
Assume that $y=(z,s)\in\mathbb{R}^{n+1}$. By the fact that the barycenter of $\mu_f$ is at the origin, we have
\begin{eqnarray*}\label{}
&&\int_{\mathbb{R}^n}|y\cdot \bar{h}(x)|^2|h(x)|^2d\nu(x)\\
&&\quad\quad=\int_{\mathbb{R}^n}|(z,s)\cdot (x,c_n \varphi^\ast(x))|^2d\nu(x)\\
&&\quad\quad=\int_{\mathbb{R}^n}|z\cdot x|^2d\nu(x)+2sc_n z\cdot\int_{\mathbb{R}^n} x d\mu_f(x)+\frac{4s^2}{n^2}\int_{\mathbb{R}^n}(\varphi^\ast)^2d\nu(x)\\
&&\quad\quad=|z|^2+s^2\\
&&\quad\quad=|y|^2.
\end{eqnarray*}
Hence $\bar{h}: \mathbb{R}^n\rightarrow S^n$ is an isotropic embedding of the Borel measure space $(\mathbb{R}^n, |h|^2d\nu)$ into $S^n$.
From Lemma \ref{Le.key}, one can see that
\begin{eqnarray}\label{}
(n+1)^{\frac{n+1}{2}}\leq\int_{\mathbb{R}^{n+1}}e^{-e_{n+1}\cdot Ty}|dTy|dy=\int_{T(\mathbb{R}^{n+1})}e^{-e_{n+1}\cdot z}dz.
\end{eqnarray}
Here $Ty$ is given by
\begin{align*}\label{}
Ty&=\int_{\mathbb{R}^n}\bar{h}(x)\frac{\tau(y\cdot \bar{h}(x))}{e_{n+1}\cdot \bar{h}(x)}|h(x)|^2d\nu(x)\\
&=\int_{\mathbb{R}^n}h(x)\frac{\tau(y\cdot \bar{h}(x))}{e_{n+1}\cdot h(x)}|h(x)|^2d\nu(x)\\
&=\int_{\mathbb{R}^n}((c_n\varphi^\ast(x))^{-1} x,1)\tau(y\cdot \bar{h}(x))|h(x)|^2d\nu(x).
\end{align*}
Lemma \ref{Le.contain} implies that
\begin{eqnarray*}\label{}
Ty\in \bigcup_{r>0} c_n^{-1}rK_{{t}}(f^\circ)\times\{r\}=:D.
\end{eqnarray*}
Therefore,
\begin{align*}\label{}
\int_{T(\mathbb{R}^{n+1})}e^{-e_{n+1}\cdot z}dz
&\leq\int_{D}e^{-e_{n+1}\cdot z}dz
=\int_0^{\infty}\int_{c_n^{-1}rK_{t}}e^{-r}dxdr\\
&=\frac{n^n}{2^n}\int_0^{\infty}r^ne^{-r}dr|K_{t}(f^\circ)|
=\frac{n^n}{2^n}\Gamma(n+1)| K_{t}(f^\circ)|.
\end{align*}
This together with the fact that $J(e^{-\frac{|x|^2}{2}})=2^{\frac{n}{2}}\Gamma(\frac{n}{2}+1)\omega_n$ gives (\ref{mainIn}).

From the equality condition of inequality (\ref{Key.In}), we have that $\varphi^\ast$ is a positive constant on ${\rm supp} (\tau(y\cdot\overline{h})|h|^2\nu)$, that is, $\varphi^\ast=c_0>0$ on ${\rm supp} (\mu_f)$ due to $\tau>0$ on $\mathbb{R}$ and $|h(x)|^2>0$ on $\mathbb{R}^{n+1}$. Since ${\rm dom}(\varphi)=\mathbb{R}^n$, one can see that
\begin{align}
\varphi^\ast(\nabla\varphi(x))
=x\cdot\nabla\varphi(x)-\varphi(x)=c_0
\end{align}
almost everywhere on $\mathbb{R}^n$. If $\varphi(x)>\varphi(o)$ for any $x\in\mathbb{R}^n\backslash\{o\}$, Lemma \ref{Le.eqcase} implies that $\varphi(x)=\|x\|_K-c_0$ for some convex body $K\in\mathcal{K}^n_o$.
In this case, by (\ref{dy=dz}), one can calculate that
\begin{align}\label{Eq.J(f,f)}
\delta J(f,f)=c_0 e^{c_0}n\Gamma(n)|K|
\end{align}
and
\begin{align*}
&\int_{\mathbb{R}^n}|x\cdot\nabla \varphi(y)|^2\varphi^*(\nabla \varphi(y))^{-1}e^{-\varphi(y)}dy\\
&\quad\quad= c_0^{-1}e^{c_0}\int_{\mathbb{R}^n}|x\cdot\nabla \|y\|_K|^2e^{-\|y\|_K}dy\\
&\quad\quad= c_0^{-1}e^{c_0}n|K|\int_{0}^\infty\int_{\partial K}|x\cdot\nabla \|z\|_K|^2r^{n-1}e^{-r}drd\sigma_K(z)\\
&\quad\quad=c_0^{-1}e^{c_0}\Gamma(n)n|K|\int_{\partial K}|x\cdot\nabla \|z\|_K|^2drd\sigma_K(z)\\
&\quad\quad=c_0^{-1}e^{c_0}\Gamma(n)|K|\|x\|_{\Gamma_{-2}K}^{2}.
\end{align*}
This together with the definition of $-\log\Gamma_{-2}(f)$ and (\ref{Eq.J(f,f)}) shows that
\begin{align*}
-\log \Gamma_{-2}(e^{-\|x\|_K+c_0})=
\frac{n}{8c_0^2}\|x\|_{\Gamma_{-2}K}^{2}.
\end{align*}
For any $c>0$ and convex body $K$,
\begin{align*}\label{}
J(e^{-c\|x\|_K^2})&=\int_{\mathbb{R}^n}e^{-c\|x\|_K^2}dx\\
&=n|K|\int_0^{\infty}\int_{\partial K}r^{n-1}e^{-cr^2}drd\sigma_K(x)\\
&=c^{-\tfrac{n}{2}}\Gamma(\tfrac{n}{2}+1)|K|.
\end{align*}
Hence
\begin{align*}
J(\Gamma_{-2}(e^{-\|x\|_K+c}))=\frac{8^\frac{n}{2}c_0^n}{n^\frac{n}{2}}\Gamma(\tfrac{n}{2}+1)|\Gamma_{-2} K|.
\end{align*}
By the definition of Legendre transform and $\varphi(x)=\|x\|_K-c_0$, we have
\begin{equation*}
\label{eq6}
\varphi^\ast(x)=\left\{
\begin{aligned}
&\ c_0,& x\in K^\circ; \\
&+\infty,  &x\notin K^\circ .
\end{aligned}
\right.
\end{equation*}
For any $x\in {\rm supp}(\mu_f)\subset K^\circ$ and $c_0\ge1$, we obtain
$$-\log t=\sup\left\{\varphi^\ast\left(\frac{x}{\varphi^\ast(x)}\right):x\in{\rm supp}{(\mu_f)}\right\}
=c_0.$$
Hence
\begin{align*}
K_{t}(f^\circ)
=\{x\in\mathbb{R}^n: \varphi^\ast(x)\le c_0\}=K^\circ
\end{align*}
and the inequality (\ref{mainIn}) becomes
\begin{align}\label{c_0}
|K^\circ||\Gamma_{-2}K|
\geq\frac{(n+1)^{\tfrac{n+1}{2}}\omega_n}{c_0^nn!n^{\frac{n}{2}}}.
\end{align}

Let $conv(A)$ denote the convex hull of $A \subset  \mathbb{R}^n$ and  $   \overline{A}$ denote the closure of $A$.
For the surface area  $\mu_f$ of the log-concave function, \cite[(16)]{CK15} says that
$$
{\rm conv}({\rm supp}(\mu_f))=\overline{\{\varphi^\ast<+\infty\}},
$$
therefore  ${\rm conv}({\rm supp}(\mu_f))=K^\circ$.

For  $x\in \partial({\rm supp}(\mu_f))\subset \partial(K^\circ)$ and $c_0<1$, we have  $x/c_0\notin K^\circ$ and  $-\log t=+\infty$.
 Therefore $|K_{t}(f^\circ)|=+\infty$ and the inequality (\ref{mainIn}) is strict. Hence the best constant of (\ref{c_0}) is $c_0=1$ and the inequality (\ref{c_0}) becomes  the LYZ  polar inequality (\ref{In.LYZpolar}). Therefore the equality of (\ref{c_0}) holds if and only if $c_0=1$ and
 $K$ is a simplex with centroid at the origin.
\end{proof}

\vskip 0.5 cm



\begin{thebibliography}{99}


%
	
	
	\bibitem{ABM} D. Alonso-Guti\'errez,  J. Bernu\'es  and B.G.  Merino, {\it Zhang's inequality for log-concave functions}, In Geometric Aspects of Functional Analysis (pp. 29-48), Springer, Cham, 2020.
	
	\bibitem{AMJV16} D. Alonso-Guti\'errez, B.G.  Merino,  C.H. Jim\'enez and R. Villa,  {\it Rogers-Shephard inequality for log-concave functions},  J. Funct. Anal., {\bf 271}  (2016), 3269-3299.
	
	\bibitem{AMJV18} D. Alonso-Guti\'errez,  B.G.    Merino,  C.H. Jim\'enez and R. Villa, {\it John's ellipsoid and the integral ratio of a log-concave function},  J. Geom. Anal., {\bf 28}  (2018), 1182-1201.		
	
	\bibitem{AFS20} S. Artstein-Avidan, D.I.  Florentin and  A. Segal, {\it Functional Brunn-Minkowski inequalities induced by polarity}, Adv. Math.,  {\bf 364} (2020), 107006.
	
	\bibitem{AKM04} S. Artstein-Avidan, B. Klartag and V.D.  Milman, {\it The Santal\'o point of a function, and a functional form
		of Santal\'o inequality}, Mathematika, {\bf51} (2004), 33-48.
	
	
	\bibitem{AKSW12} S. Artstein-Avidan, B. Klartag, C. Sch\"utt and E.M.  Werner, {\it Functional affine-isoperimetry and an inverse
		logarithmic Sobolev inequality}, J. Funct. Anal., {\bf 262} (2012), 4181-4204.	
	
	




\bibitem{Ball1986} K. Ball, {\it Isometric problems in $L_p$ and sections of convex sets}, PhD dissertation, Cambridge, 1986.

\bibitem{Ball1991}
K. Ball, {\it Volume ratios and a reverse isoperimetric inequality}, J. London Math.
Soc., {\bf 44} (1991), 351-359.



\bibitem{Barthe1998}
F. Barthe, {\it On a reverse form of the Brascamp-Lieb inequality}, Invent. Math., {\bf 134}
(1998), 335-361.


\bibitem{BBF14} F. Barthe, K.J.  B\"or\"oczky and  M. Fradelizi, {\it Stability of the functional forms of the Blaschke-Santal\'o inequality}, Monatsh. Math., {\bf 173} (2014), 135-159.
	

\bibitem{BD2021} K. B\"or\"oczky and A. De, {\it Stability of the Pr\'{e}kopa-Leindler inequality for log-concave functions}, Adv. Math., {\bf 386} (2021), 107810.



\bibitem{CFGLSW16} U. Caglar, M. Fradelizi, O. Guedon, J. Lehec,  C. Sch\"utt and E.M. Werner, {\it Functional versions of $L_p$-affine surface area and entropy inequalities}, Int. Math. Res. Not.,  {\bf 2016 }(2016),  1223-1250.
	
	
%
	
	\bibitem{CY16} U. Caglar and D. Ye, {\it Affine isoperimetric inequalities in the functional Orlicz-Brunn-Minkowski theory},  Adv.  Appl. Math., {\bf 81 }(2016), 78-114.
	
\bibitem{CampiGronchi2002}
S. Campi and P. Gronchi, {\it The $L_p$-Busemann-Petty centroid inequality}, Adv.
Math., {\bf 167} (2002), 128-141.

\bibitem{CampiGronchi2002-2}
S. Campi and P. Gronchi, {\it On the reverse $L_p$-Busemann-Petty centroid inequality}, Mathematika,
{\bf 49} (2002), 1-11.


	\bibitem{Col06} A. Colesanti,  {\it Functional inequalities related to the Rogers-Shephard inequality}, Mathematika,  {\bf 53} (2006), 81-101.
	
	
	\bibitem{CF13} A. Colesanti and I. Fragal\`a,  {\it The first variation of the total mass of log-concave functions and related inequalities}, Adv. Math., {\bf244} (2013), 708-749.
	
	\bibitem{CK15}D. Cordero-Erausquin and B. Klartag, {\it Moment measures},  J. Funct. Anal., {\bf 268} (2015),  3834-3866.
	
%
%
\bibitem{FXY20+} N. Fang, S. Xing and D. Ye, {\it Geometry of log-concave functions: the $L_p$ Asplund sum and the $L_{p}$ Minkowski problem}, Calc. Var. Partial Differ. Equ., {\bf 21} (2022),
Art. 45.

\bibitem{fangzhou1} N. Fang and J. Zhou, {\it LYZ ellipsoid and Petty projection body for log-concave functions}, Adv. Math., {\bf 340} (2018), 914-959.
\bibitem{fangzhou3}N. Fang and J. Zhou, {\it The Busemann-Petty problem on entropy of log-concave functions}, Sci. China Math., {\bf 65} (2022), 2171-2182.


\bibitem{FM07} M. Fradelizi and M. Meyer, {\it Some functional forms of Blaschke-Santal\'o inequality}, Math. Z., {\bf 256} (2007), 379-395.
	
	\bibitem{FM08} M. Fradelizi and M. Meyer, {\it Some functional inverse Santal\'o inequality}, Adv. Math., {\bf 218} (2008),
	1430-1452.
	

\bibitem{Gardner1995}
R. J. Gardner, {\it Geometric tomography}, Cambridge Univ. Press, Cambridge, 1995.


\bibitem{GiannopoulosMilman2002}
A. A. Giannopoulos and V. D. Milman, {\it Extremal problems and isotropic positions of convex bodies}, Israel J. Math., {\bf 117} (2000), 29-60.


\bibitem{GiannopoulosPapad2000}
A. A. Giannopoulos and M. Papadimitrakis, {\it Isotropic surface area measures}, Mathematika, {\bf 46} (1999), 1-13.


\bibitem{HF2009} C. Haberl and F. Schuster, {\it General $ L_p $ affine isoperimetric inequalities,} J. Differential Geom., {\bf 83} (2009), 1-26.

\bibitem{IS2020} H. Iriyeh and M. Shibata, {\it Symmetric Mahler¡¯s conjecture for the volume product in the $3$-dimensional case}, Duke Math. J., {\bf 169} (2020), 1077-1134.

\bibitem{IN2022} G. Ivanov and M. Nasz\'odi,  {\it Functional John ellipsoids}, J. Funct. Anal., {\bf 282} (2022), 109441.
\bibitem{KM05} B.  Klartag and V.D.  Milman, {\it Geometry of log-concave functions and measures},  Geom. Dedicata, {\bf 112}  (2005), 169-182.
	
	\bibitem{LSW19} B.  Li, C.  Sch\"utt and E.M. Werner, {\it Floating functions}, Israel J. Math.,  {\bf231} (2019), 181-210.
	
	\bibitem{LSW19jga} B.  Li, C.  Sch\"utt and E.M. Werner, {\it The L\"owner function of a log-concave function}, J. Geom. Anal., {\bf 31} (2021), 423-456.
	
	\bibitem{Lin17} Y. Lin, {\it Affine Orlicz P\'olya-Szeg\"o principle for log-concave functions}, J. Funct. Anal., {\bf273} (2017), 3295-3326.
%

\bibitem{LYZ2000}
E. Lutwak, D. Yang and G. Zhang, {\it $L_p$ affne isoperimetric inequalities}, J. Differential
Geom., {\bf 56} (2000), 111-132.

\bibitem{LYZ2000Duke}
E. Lutwak, D. Yang and G. Zhang, {\it A new ellipsoid associated with convex bodies}, Duke Math. J., {\bf 104} (2000),
375-390.

\bibitem{LYZ2004}E. Lutwak, D. Yang and G. Zhang, {\it Volume inequalities for subspaces of $L_p$}, J. Differential Geom., {\bf 68} (2004), 159-184.

\bibitem{LYZ2007}
E. Lutwak, D. Yang and G. Zhang, {\it Volume inequalities for isotropic measures}, Amer. J. Math., {\bf 129} (2007),
1711-1723.

\bibitem{LYZ2010}
E. Lutwak, D. Yang and G. Zhang, {\it A volume inequality for polar bodies}, J. Differential Geom., {\bf 84} (2010), 163-178.


\bibitem{LZ1997}
E. Lutwak and G. Zhang, {\it Blaschke-Santal\'{o} inequalities}, J. Differential Geom., {\bf 45} (1997), 1-16.

\bibitem{Mahler1939} K. Mahler, {\it Ein Minimalproblem f\"ur konvexe Polygone}, Mathematica (Zutphen), {\bf B7}
(1939), 118-127.



\bibitem{MeyerWerner2006}
M. Meyer and E. Werner, {\it On the $p$-affine surface area}, Adv. Math., {\bf 152} (2000), 288-313.


\bibitem{MR13} V.D.  Milman and L. Rotem,  {\it Mixed integrals and related inequalities}, J. Funct. Anal.,  {\bf264} (2013),  570-604.

%

\bibitem{Roc70} T. Rockafellar, {\it Convex Analysis}, Princeton University Press, Princeton, (1970).	

\bibitem{Rot22} L. Rotem, {\it The anisotropic total variation and surface area measures}, arXiv preprint arXiv:2206.13146 (2022).

\bibitem{SchneiderR2014}
R. Schneider, {\it Convex bodies: the Brunn-Minkowski theory}, Cambridge University Press, 2014.

\bibitem{SW2012}F. Schuster and M. Weberndorfer, {\it Volume inequalities for asymmetric Wulff shapes,} J. Differential Geom., {\bf 92} (2012): 263-283.

\bibitem{ZhuHongYe}B. Zhu, H. Hong and D. Ye, {\it The Orlicz-Petty bodies}, Interna. Math. Res. Notices, {\bf 14} (2018), 4356-4403.
%

\bibitem{ZouXiong2014}
D. Zou and G. Xiong, {\it Orlicz-John ellipsoids}, Adv. Math., {\bf 265} (2014), 132-168.

\bibitem{ZouXiong2016}
D. Zou and G. Xiong, {\it Orlicz-Legendre ellipsoids}, J. Geom. Anal., {\bf 26} (2016), 2474-2502.
\end{thebibliography}
\end{document}